\numberwithin{equation}{section} 
\newtheorem{thm}[equation]{Theorem} 
\newtheorem{question}[equation]{Question}
\newtheorem{prop}[equation]{Proposition}
\newtheorem{lemma}[equation]{Lemma}
\newtheorem{cor}[equation]{Corollary}
\theoremstyle{definition}
\theoremstyle{remark}
\newtheorem{rmk}[equation]{Remark}
\newcommand{\Q}{\mathbb Q}
\newcommand{\Z}{\mathbb Z}
\renewcommand{\L}{\mathbb L}
\renewcommand{\P}{\mathbb P}
\newcommand{\C}{\mathbb C}
\renewcommand{\c}{\subseteq}
\newcommand{\A}{\mathbb A}
\newcommand{\mc}[1]{\mathcal{#1}}
\newcommand{\set}[1]{\{#1\}}
\newcommand{\on}[1]{\operatorname{#1}}
\title[Motivic classes and unramified cohomology]{Motivic classes of classifying stacks of finite groups and unramified cohomology}
\author{Federico Scavia}
\begin{document}

\begin{abstract}
	Combining work of Peyre, Colliot-Th\'el\`ene and Voisin, we give the first example of a finite group $G$ such that the motivic class of its classifying stack $BG$ in Ekedahl's Grothendieck ring of stacks over $\C$ is non-trivial and $BG$ has trivial unramified Brauer group.
\end{abstract}

\maketitle

\section{Introduction}
Let $G$ be a finite group, let $k$ be a field, and let $V$ be a faithful $G$-representation over $k$. 
We say that the classifying stack $BG$ is stably $k$-rational if the quotient $V/G$ is stably $k$-rational, that is, $V/G\times_k \A_k^m$ is birationally equivalent to $\A^n_k$ for some $m,n\geq 0$. By the no-name lemma, this definition does not depend on $V$. The question of the stable rationality of $BG$ is a variation of the following problem, first considered by E. Noether \cite{noether1917gleichungen}: if $V$ is the regular representation of $G$ over $k$, is the field of invariants $k(V)^G$ purely transcendental over $k$? 
If Noether's problem for $G$ over $k$ has an affirmative answer, then $BG$ is stably $k$-rational.

In \cite{swan1969invariant} and \cite{voskresenskii1970birational}, R. Swan and V. Voskresenski\u{\i} independently constructed the first example of $G$ such that $BG$ is not stably rational. In their example, $k=\Q$ and $G=\Z/47\Z$. Later, D. Saltman remarked that Wang's counterexamples to the Grunwald problem imply that $B(\Z/8\Z)$ is not stably rational over $\Q$. A complete solution to Noether's problem for abelian groups was given by H. Lenstra \cite{lenstra1974rational}.

The first examples over an algebraically closed field were given by Saltman in \cite{saltman1984noether}. Saltman observed that the unramified Brauer group $\on{Br}_{\on{nr}}(K/k)$ of a purely transcendental field extension $K/k$ is trivial, and then construced a finite group $G$ and a $k$-representation $V$ of $G$ such that $\on{Br}_{\on{nr}}(k(V)^G/k)\neq 0$. When $k=\C$, E. Peyre exhibited the first examples of groups $G$ such that $BG$ is not stably rational over $\C$, but $\on{Br}_{\on{nr}}(\C(V)^G/\C)=0$. These examples satisfy $H^3_{\on{nr}}(\C(V)^G/\C,\Q/\Z)\neq 0$; see \cite[Theorem 3.1]{peyre2009unramified}. 

In this paper, we consider a motivic variant of the problem of stable rationality of $BG$. We denote by $K_0(\on{Stacks}_k)$ the Grothendieck ring of algebraic $k$-stacks, as defined by T. Ekedahl in \cite{ekedahl2009grothendieck}; see \Cref{prelim}. By definition, every algebraic stack $\mc{X}$ of finite type over $k$ and with affine stabilizers has a class $\set{\mc{X}}$ in $K_0(\on{Stacks}_k)$. The multiplicative identity of $K_0(\on{Stacks}_k)$ is $1=\set{\on{Spec}k}$.

It is an interesting problem to compute the class $\set{BG}$ in $K_0(\on{Stacks}_{k})$. We have $\set{BG}=1$ in many cases, e.g. when $G=\mu_n$, $G=S_n$, or $G$ is a finite subgroup of $\on{GL}_3$ and $k$ is algebraically closed of characteristic zero; see \cite[Proposition 3.2, Theorem 4.1]{ekedahl2009geometric} and \cite[Theorem 2.4]{martino2016ekedahl}. In all examples of finite groups $G$ such that $\set{BG}=1$, the classifying stack $BG$ is known to be stably rational.

There are also examples of finite groups $G$ for which $\set{BG}\neq 1$ in $K_0(\on{Stacks}_k)$. In \cite[Corollary 5.8]{ekedahl2009geometric}, Ekedahl showed that $\set{B(\Z/47\Z)}\neq 1$ in $K_0(\on{Stacks}_{\Q})$. Moreover, in \cite[Theorem 5.1]{ekedahl2009geometric}, he showed that if $\on{Br}_{\on{nr}}(\C(V)^G/\C)\neq 0$, then $\set{BG}\neq 1$. Thus, the examples $G$ of Swan, Voskresenski\u{\i} and Saltman also satisfy $\set{BG}\neq 1$. It is natural to wonder whether Peyre's examples also satisfy $\set{BG}\neq 1$. 

\begin{question}\label{beyond-question}
	Does there exist a finite group $G$ such that $\on{Br}_{\on{nr}}(\C(V)^G/\C)=0$, but $\set{BG}\neq 1$ in $K_0(\on{Stacks}_{\C})$? 
\end{question}
To our knowledge, this question was first asked by Ekedahl, and was posed to us by A. Vistoli.  

If $H^i_{\on{nr}}(\C(V)^G/\C,\Q/\Z)\neq 0$ for some $i$, then $BG$ is not stably rational; see \cite[Proposition 3.4]{merkurjev2017invariants}. When $i\geq 3$, it is not known whether $H^i_{\on{nr}}(\C(V)^G/\C,\Q/\Z)\neq 0$ implies $\set{BG}\neq 1$ in $K_0(\on{Stacks}_{\C})$. We prove that this is the case if $i=3$.

\begin{thm}\label{beyond-br}
	Let $k$ be a field of characteristic zero, let $G$ be a finite group, and let $V$ be a faithful complex representation of $G$. Assume that $H^3_{\on{nr}}(\C(V)^G/\C,\Q/\Z)\neq 0$. Then $\set{BG}\neq 1$ in $K_0(\on{Stacks}_{k})$.
\end{thm}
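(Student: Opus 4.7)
The plan is to reduce to $k=\C$ via base change, translate the hypothesis using Colliot-Th\'el\`ene and Voisin's theorem, and construct an additive invariant on $K_0(\on{Var}_\C)$ via Bittner's presentation to derive a contradiction. The functorial base-change ring homomorphism $K_0(\on{Stacks}_k)\to K_0(\on{Stacks}_\C)$ (factoring through a common extension if necessary) sends $\set{BG}$ to $\set{BG}$, so it suffices to treat $k=\C$. By Ekedahl's description of $K_0(\on{Stacks}_\C)$ as the localization $K_0(\on{Var}_\C)[\L^{-1},(\L^n-1)^{-1}\colon n\geq 1]$, together with the identity $\set{V/G}=\set{BG}\cdot\L^{\dim V}$, the assumption $\set{BG}=1$ becomes $T\cdot\set{V/G}=T\cdot\L^{\dim V}$ in $K_0(\on{Var}_\C)$ for some $T=\L^a\prod_j(\L^{n_j}-1)^{b_j}$. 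Let $X$ be a smooth projective model of $V/G$; it is unirational, hence rationally connected, and Colliot-Th\'el\`ene and Voisin's theorem identifies $H^3_{\on{nr}}(X,\Q/\Z)$ with the finite group $Z^4(X)=\on{Hdg}^4(X,\Z)/\on{Im}(\on{CH}^2(X)\to H^4(X,\Z))$ measuring the defect of the integral Hodge conjecture in degree four, non-zero by hypothesis.

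The core construction is an additive invariant $\nu\colon K_0(\on{Var}_\C)\to A$ into a suitable abelian group $A$ such that $\nu(W)=H^3_{\on{nr}}(W,\Q/\Z)$ for smooth projective $W$. Using Bittner's presentation, the blow-up relation is verified via birational invariance of unramified cohomology, giving $\nu(\tilde W)=\nu(W)$, and via the Faddeev--Rost projective-bundle formula, giving $\nu(\P(N_{Z/W}))=\nu(Z)$ for the exceptional divisor; hence $\nu$ is well-defined on $K_0(\on{Var}_\C)$. Computing $\nu(\L)=\nu(\P^1)-\nu(\on{pt})=0$ and, by stable birational invariance applied inductively to $\set{\P^k}=\L^k+\set{\P^{k-1}}$, $\nu(Y\times\A^k)=0$ for every $Y$ and every $k\geq 1$, the invariant annihilates every $\L$-multiple. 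Expanding $T=\sum_i T_i\L^i$ as a polynomial in $\L$ and applying $\nu$ to $T\cdot\set{V/G}=T\cdot\L^{\dim V}$ then collapses the right side to zero and the left side to $T_0\cdot\nu(V/G)$, so $\nu(V/G)=0$ provided $T_0\neq 0$ (i.e., $a=0$, which forces $T_0=\pm 1$).

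The main obstacles are twofold. First, one must ensure $T$ can be chosen with $a=0$; since $\L$ is a zero-divisor in $K_0(\on{Var}_\C)$ (by Borisov), this requires a direct argument that the Ekedahl-localization relation $\set{V/G}=\L^{\dim V}$ lifts to an equality in $K_0(\on{Var}_\C)[(\L^n-1)^{-1}]$ without introducing any $\L^{-1}$. Second, from $\nu(V/G)=0$ one must derive the contradiction $H^3_{\on{nr}}(X,\Q/\Z)=0$ via the scissor identity $\nu(V/G)=\nu(X)-\nu(X\setminus V/G)$, so the problem reduces to showing $\nu(X\setminus V/G)=0$. I would handle this by choosing an equivariant smooth compactification of $V/G$ with simple-normal-crossings boundary and arguing by induction on dimension: each closed stratum is birationally a projective bundle over a lower-dimensional quotient variety, so its unramified cohomology coincides with that of the base, and iterated application together with vanishing on $\A^k$-factors forces $\nu$ to vanish on the entire boundary, yielding the desired contradiction.
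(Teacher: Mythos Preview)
Your argument rests on the identity $\set{V/G}=\set{BG}\cdot\L^{\dim V}$ in $K_0(\on{Stacks}_{\C})$, and this identity is false. The vector-bundle relation gives $\set{[V/G]}=\set{BG}\cdot\L^{\dim V}$ for the \emph{quotient stack} $[V/G]$, but the stack $[V/G]$ and the scheme $V/G$ do not have the same class: over the non-free locus the natural map $[V/G]\to V/G$ is a nontrivial gerbe, and the difference $\set{[V/G]}-\set{V/G}$ is a combination of terms $(\set{BH}-1)\cdot(\text{strata classes})$ for various stabilizer subgroups $H\c G$. Since the whole point is that $\set{BG}\neq 1$ for certain $G$, you cannot assume these correction terms vanish. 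The paper handles this correctly by invoking Ekedahl's stratification formula $\set{BG}\L^{d}=\set{U/G}+\sum_j m_j\set{BH_j}\L^{a_j}$ with $H_j\subsetneq G$ and $a_j\leq d-1$, and then proceeding by induction on $|G|$.

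Even granting a corrected starting relation, two further steps in your outline do not go through. First, your invariant $\nu$ is defined only on $K_0(\on{Var}_{\C})$, so you cannot evaluate it on the classes $\set{BH_j}$ that inevitably appear; relatedly, your proposed lift to an equality with $a=0$ (no factor of $\L$) is exactly the assertion that a certain element of $K_0(\on{Var}_\C)$ is $(\L^n-1)$-torsion rather than $\L$-torsion, and there is no mechanism to guarantee this. The paper bypasses the issue entirely by constructing an invariant $Z_{-4}$ on the completion $\hat{K}_0(\on{Var}_\C)$, which automatically factors through $K_0(\on{Stacks}_\C)$. Second, your invariant $\nu$ records only the stable-birational datum $H^3_{\on{nr}}$, with no dimension shift; so even in an inductive scheme the subgroup contributions $\nu(\text{terms for }H_j)$ would equal $[H^3_{\on{nr}}(\C(W)^{H_j}/\C)]$, which need not vanish. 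The paper's invariant is dimension-shifted: on a term $\set{BH_j}\L^{a_j-d}$ one lands in $Z_{2i}$ with $i\geq -1$, and these vanish by the Lefschetz $(1,1)$ theorem, which is what makes the induction close. Finally, your claim that the boundary strata of an equivariant compactification of $V/G$ are ``birationally projective bundles over lower-dimensional quotient varieties'' is not correct in general; the $X_q$ arising from resolution and compactification can have arbitrary $H^3_{\on{nr}}$, so the asserted vanishing $\nu(X\setminus V/G)=0$ is unjustified.
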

A crucial ingredient in our proof of \Cref{beyond-br} is a result of J.-L. Colliot-Th\'el\`ene and C. Voisin \cite{colliot2012cohomologie}; see \Cref{ct-voisin} below. 


The combination of Peyre's examples in \cite{peyre2009unramified} and \Cref{beyond-br} has the following consequence.

\begin{cor}
	\Cref{beyond-question} has an affirmative answer.
\end{cor}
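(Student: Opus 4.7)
The plan is immediate: combine Peyre's construction with \Cref{beyond-br}. By \cite[Theorem 3.1]{peyre2009unramified}, there exists a finite group $G$, together with a faithful complex representation $V$, such that
\[
\on{Br}_{\on{nr}}(\C(V)^G/\C) = 0 \quad \text{and} \quad H^3_{\on{nr}}(\C(V)^G/\C,\Q/\Z) \neq 0.
\]
I would take such a $G$ as the candidate and verify that it answers \Cref{beyond-question} affirmatively.

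The verification is a single application of \Cref{beyond-br} with $k = \C$: the second hypothesis above is exactly what that theorem requires, and its conclusion delivers $\set{BG} \neq 1$ in $K_0(\on{Stacks}_\C)$. Together with the first hypothesis (the vanishing of the unramified Brauer group), this $G$ exhibits precisely the combination of properties requested in \Cref{beyond-question}.

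There is essentially no obstacle at this stage, since both ingredients are already on the table: the entire content of the work lies in \Cref{beyond-br}, i.e.\ in showing that nontriviality of degree-three unramified cohomology obstructs triviality of the motivic class $\set{BG}$. Once that theorem is granted, Peyre's finite groups immediately realize the desired phenomenon, and no further argument is needed.
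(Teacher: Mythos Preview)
Your proposal is correct and matches the paper's approach exactly: the corollary is stated immediately after the sentence ``The combination of Peyre's examples in \cite{peyre2009unramified} and \Cref{beyond-br} has the following consequence,'' and no further proof is given. Your argument spells out precisely this combination.
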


In \cite{totaro2016motive} B. Totaro asked, among other things, whether the stable rationality of $BG$ over $\C$ is equivalent to the condition $\set{BG}=1$ in $K_0(\on{Stacks}_{\C})$. An affirmative answer to Totaro's question is supported by all known examples, and also by \Cref{beyond-br}. A proof of the equivalence seems to be out of reach of current techniques.


\section{The Grothendieck ring of stacks}\label{prelim}
Let $k$ be an arbitrary field. By definition, the Grothendieck ring of varieties $K_0(\on{Var}_{k})$ is the abelian group generated by isomorphism classes $\set{X}$ of $k$-schemes $X$ of finite type, modulo the relations $\set{X}=\set{Y}+\set{X\setminus Y}$ for every closed subscheme $Y\c X$. The multiplication in $K_0(\on{Var}_k)$ is defined on generators by $\set{X}\cdot\set{Y}:=\set{X\times_k Y}$, and we have $1=\set{\on{Spec}k}$. We set $\L:=\set{\A_k^1}$. 

Following Ekedahl \cite{ekedahl2009grothendieck}, we define the Grothendieck ring of stacks $K_0(\on{Stacks}_k)$ as the abelian group generated by isomorphism classes $\set{\mc{X}}$ of algebraic stacks $\mc{X}$ with affine stabilizers and of finite type over $k$, modulo the relations $\set{\mc{X}}=\set{\mc{Y}}+\set{\mc{X}\setminus \mc{Y}}$ for every closed embedding $\mc{Y}\c \mc{X}$, and the relations $\set{\mc{E}}=\set{\A_k^r\times_k \mc{X}}$ for every vector bundle $\mc{E}\to\mc{X}$ of constant rank $r$. The product is defined on generators by $\set{\mc{X}}\cdot\set{\mc{Y}}:=\set{\mc{X}\times_k\mc{Y}}$, and we have $1=\set{\on{Spec}k}$. By \cite[Theorem 1.2]{ekedahl2009grothendieck}, the canonical ring homomorphism $K_0(\on{Var}_k)\to K_0(\on{Stacks}_k)$ induces an isomorphism
\[K_0(\on{Stacks}_k)\cong K_0(\on{Var}_k)[\set{\L^{-1},(\L^n-1)^{-1}:n\geq 1}].\]

The following was observed by Ekedahl in \cite[p. 14]{ekedahl2009grothendieck}.
\begin{lemma}\label{bittner-like}
	Let $k$ be a field of characteristic zero. Then, as an abelian group, $K_0(\on{Var}_k)[\L^{-1}]$ may be presented as the abelian group generated by formal fractions of the form $\set{X}/\L^m$, where $X$ is a smooth projective variety and $m\geq 0$, modulo the following relations:
	\begin{enumerate}[label=(\roman*)]
		\item $\set{\emptyset}=0$,
		\item $\set{\widetilde{X}}/\L^m-\set{X}/\L^m=\set{E}/\L^m-\set{Y}/\L^m$, for every smooth projective variety $X$, every blow-up $\widetilde{X}\to X$ at a smooth closed subscheme $Y\c X$, with exceptional divisor $E\to Y$, and every $m\geq 0$,
		\item $\set{X\times_k \P^1_k}/\L^{m+1}-\set{X}/\L^{m+1}=\set{X}/\L^{m}$, for every smooth projective variety $X$ and every $m\geq 0$. 
	\end{enumerate} 
\end{lemma}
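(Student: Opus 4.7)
My plan is to deduce the lemma from F. Bittner's theorem, which in characteristic zero presents $K_0(\on{Var}_k)$ as the abelian group generated by smooth projective $k$-varieties modulo $\set{\emptyset}=0$ and the blow-up relation $\set{\widetilde{X}}-\set{X}=\set{E}-\set{Y}$; this is relation (ii) specialized to $m=0$. Viewed as an abelian group, $K_0(\on{Var}_k)[\L^{-1}]$ is the colimit of the telescope $K_0(\on{Var}_k)\xrightarrow{\cdot\L}K_0(\on{Var}_k)\xrightarrow{\cdot\L}\cdots$, so I will think of a generator $\set{X}/\L^m$ in the presentation as the Bittner class $\set{X}$ sitting in the $m$-th copy of this telescope; relation (iii) will then be exactly the compatibility identifying consecutive levels.

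Let $M$ denote the abelian group presented in the lemma. First I would construct a forward map $\phi\colon M\to K_0(\on{Var}_k)[\L^{-1}]$ by $\set{X}/\L^m\mapsto\set{X}/\L^m$; its well-definedness is immediate: (ii) follows from Bittner's blow-up formula divided by $\L^m$, and (iii) follows from the identity $\set{X\times_k\P^1_k}=(\L+1)\set{X}$ in $K_0(\on{Var}_k)$. For the inverse, for each $m\geq 0$ I would define $\psi_m\colon K_0(\on{Var}_k)\to M$ by sending the Bittner generator $\set{X}$ to $\set{X}/\L^m$, with well-definedness ensured by relations (i) and (ii) of $M$ at stage $m$. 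To assemble these into a single map $\psi\colon K_0(\on{Var}_k)[\L^{-1}]\to M$ I must verify the colimit compatibility $\psi_{m+1}(\L\cdot\alpha)=\psi_m(\alpha)$ for all $\alpha\in K_0(\on{Var}_k)$; by additivity it suffices to do this on a Bittner generator $\set{X}$, where $\L\cdot\set{X}=\set{X\times_k\P^1_k}-\set{X}$ in $K_0(\on{Var}_k)$ reduces the identity to relation (iii) of $M$. Direct inspection on generators then shows $\phi$ and $\psi$ are mutually inverse.

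The only serious input is Bittner's theorem itself, whose proof rests on the weak factorization theorem of Abramovich, Karu, Matsuki and W\l odarczyk, and this is precisely where the hypothesis $\on{char}(k)=0$ enters. Once Bittner is in hand, the rest is a mechanical colimit/localization manipulation, with relation (iii) engineered precisely to encode the single compatibility needed between consecutive levels of the telescope.
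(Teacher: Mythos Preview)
Your argument is correct and is precisely the approach the paper has in mind: the paper's proof says only that the lemma ``easily follows from Bittner's presentation of $K_0(\on{Var}_k)$,'' and your telescope-colimit unpacking is exactly the routine verification implicit in that remark. In particular, your observation that relation (iii) is engineered to encode the single compatibility $\psi_{m+1}(\L\cdot\set{X})=\psi_m(\set{X})$ via $\L\cdot\set{X}=\set{X\times_k\P^1_k}-\set{X}$ is the only point requiring any thought, and you have handled it correctly.
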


\begin{proof}
	This easily follows from Bittner's presentation of $K_0(\on{Var}_k)$, given in \cite[Theorem 3.1]{bittner2004universal}.
\end{proof}

The dimension filtration $\on{Fil}^{\bullet}K_0(\on{Var}_k)[\L^{-1}]$ of $K_0(\on{Var}_k)[\L^{-1}]$ is defined as follows: for every $n\in \Z$, $\on{Fil}^{n}K_0(\on{Var}_k)[\L^{-1}]$ is the subgroup generated by the elements $\set{X}/\L^m$, where $X$ is a $k$-variety and $\on{dim}(X)-m\leq n$. When $\on{char}k=0$, using resolution of singularities, we see that $\on{Fil}^{n}K_0(\on{Var}_k)[\L^{-1}]$ is generated by elements of the form $\set{X}/\L^m$, where $X$ is a smooth projective $k$-variety and $\on{dim}(X)-m\leq n$; see \cite[Lemma 3.1]{ekedahl2009grothendieck}.

We denote by $\hat{K}_0(\on{Var}_k)$ the completion of $K_0(\on{Var}_k)$ with respect to the dimension filtration. For every $n,n'\in \Z$, we have \[\on{Fil}^{n}K_0(\on{Var}_k)[\L^{-1}]\cdot \on{Fil}^{n'}K_0(\on{Var}_k)[\L^{-1}]\c \on{Fil}^{n+n'}K_0(\on{Var}_k)[\L^{-1}].\] It follows that the multiplication on $K_0(\on{Var}_k)$ extends to $\hat{K}_0(\on{Var}_k)$, making it into a commutative ring with identity. 

For every $n\geq 1$, we have $(1-\L^n)\sum_{i\geq 0} \L^{ni}=1$ in $\hat{K}_0(\on{Var}_k)$. Therefore, we have canonical ring homomorphisms
\[K_0(\on{Var}_k)\to K_0(\on{Stacks}_k)\to \hat{K}_0(\on{Var}_k).\]

\section{The integral Hodge Question and unramified cohomology}
Let $X$ be a smooth projective variety over $\C$, and let $d:=\on{dim}(X)$. For every integer $i$, we write $CH^i(X)$ for the group of algebraic cycles of codimension $i$ on $X$ modulo rational equivalence, and we set $CH_i(X):=CH^{d-i}(X)$.  We have the cycle class maps \[\on{cl}_X^i:CH^i(X)\to H^{2i}(X(\C),\Z).\] 
By convention, we set $CH^i(X)=0$ and $H^{2i}(X(\C),\Z)=0$ when $i<0$ and $i>d$.

A cohomology class $\alpha \in H^{2i}(X(\C),\Z)$ is called an integral Hodge class if its image in $H^{2i}(X(\C),\C)$ is of type $(i,i)$ with respect to the Hodge decomposition of $H^{2i}(X(\C),\C)$. We denote by $\on{Hdg}^{2i}(X,\Z)$ the subgroup of integral Hodge classes of $H^{2i}(X(\C),\Z)$. We have an inclusion $\on{Im}(\on{cl}_X^i)\c \on{Hdg}^{2i}(X,\Z)$. We set \[Z^{2i}(X):=\on{Hdg}^{2i}(X,\Z)/\on{Im}(\on{cl}_X^i),\qquad Z_{2i}(X):=Z^{2d-2i}(X).\] 

For every integer $i$, the abelian group $Z^{2i}(X)$ is finitely generated. The Hodge Conjecture for cycles of codimension $i$ on $X$ predicts that $Z^{2i}(X)$ is finite. The integral Hodge Question for cycles of codimension $i$ on $X$ asks whether $Z^{2i}(X)$ is zero. By the Lefschetz Theorem on $(1,1)$-classes, the integral Hodge Question has an affirmative answer when $i=1$. When $i=2$, the integral Hodge Question has a negative answer in general, as shown by examples of M. Atiyah and F. Hirzebruch \cite{atiyah1962analytic}.

\begin{thm}[Colliot-Th\'el\`ene, Voisin]\label{ct-voisin}
	Let $X$ be a smooth projective variety over $\C$, of dimension $d$. Assume that there exists a smooth closed subvariety $S\c X$ of dimension $\leq 2$, such that the pushforward map $CH_0(S)\to CH_0(X)$ is surjective. Then we have an isomorphism of finite groups \[H^3_{\on{nr}}(X,\Q/\Z)\cong Z^4(X).\]
\end{thm}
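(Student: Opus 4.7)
My approach combines two main ingredients: (A) the Bloch-Ogus coniveau spectral sequence together with Merkurjev-Suslin's theorem (the Bloch-Kato conjecture in degree $2$), which produces an unconditional exact sequence relating the relevant groups, and (B) the Bloch-Srinivas decomposition of the diagonal triggered by the hypothesis on $S$, which kills the outer terms of that sequence.

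\emph{Step (A).} For every smooth projective complex variety $X$, the Bloch-Ogus spectral sequence with coefficients $\mu_n^{\otimes 2}$, combined with Merkurjev-Suslin's identification of the Zariski sheaves $\mc{H}^q(\mu_n^{\otimes 2})$ for $q\leq 2$, the Artin comparison with Betti cohomology, and taking the direct limit over $n$, yields a natural short exact sequence
\[0\to H^3_{\on{nr}}(X,\Z)\otimes\Q/\Z\to H^3_{\on{nr}}(X,\Q/\Z)\to Z^4(X)_{\on{tors}}\to 0.\]
The connecting map $H^3_{\on{nr}}(X,\Q/\Z)\to Z^4(X)_{\on{tors}}$ arises as a Bockstein-type boundary from the exact sequence $0\to \Z\to \Z\to \Z/n\to 0$, and identifies, via the cycle class map $CH^2(X)\to H^4_{\on{\acute{e}t}}(X,\Z(2))$ (well-defined by Bloch-Kato), with the classical cokernel construction of $Z^4(X)$. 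This step uses neither $S$ nor $\dim X$.

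\emph{Step (B).} The hypothesis that $CH_0(S)\to CH_0(X)$ is surjective invokes the Bloch-Srinivas theorem, producing an integer $N>0$, a divisor $D\c X$, and cycles $\Gamma_1,\Gamma_2\in CH^d(X\times X)$ supported on $S\times X$ and $X\times D$ respectively, such that $N\cdot\Delta_X=\Gamma_1+\Gamma_2$ in $CH^d(X\times X)$. Acting on $H^3_{\on{nr}}(X,\Z)$: the correspondence $\Gamma_1^*$ factors through $H^3_{\on{nr}}(\widetilde{S},\Z)$ for a desingularization $\widetilde{S}\to S$, which vanishes because $\dim \widetilde{S}\leq 2$ forces the Galois cohomological dimension of $\C(\widetilde{S})$ to be at most $2$; the correspondence $\Gamma_2^*$ has trivial restriction to the generic point of $X$ because $\Gamma_2$ is supported on $X\times D$ with $D$ a proper divisor, and $H^3_{\on{nr}}(X,\Z)$ injects into its generic stalk. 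Hence $N\cdot H^3_{\on{nr}}(X,\Z)=0$, and the left-hand term of the sequence in Step (A) vanishes. Acting now on $\on{Hdg}^4(X,\Z)$, for any Hodge class $\alpha$ the identity $N\alpha=\Gamma_1^*\alpha+\Gamma_2^*\alpha$ yields an algebraic class: $\Gamma_1^*\alpha$ is a Gysin pushforward from $H^4(\widetilde{S},\Z)$ which, since $\dim \widetilde{S}\leq 2$, is spanned by the class of a point and thus algebraic; $\Gamma_2^*\alpha$ is a Gysin pushforward from $H^2(\widetilde{D},\Z)\cap H^{1,1}$ for a desingularization $\widetilde{D}\to D$, which is algebraic by the Lefschetz $(1,1)$-theorem. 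Hence $Z^4(X)$ is $N$-torsion, and finite as a quotient of the finitely generated group $\on{Hdg}^4(X,\Z)$.

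Combining (A) and (B) collapses the exact sequence to the desired isomorphism $H^3_{\on{nr}}(X,\Q/\Z)\cong Z^4(X)$ of finite groups. The main obstacle is Step (A): building the exact sequence requires careful bookkeeping of differentials and edge maps in the Bloch-Ogus spectral sequences for both $\Z$ and $\Q/\Z$ coefficients, identifying the boundary map into $Z^4(X)_{\on{tors}}$ with the geometrically defined connecting homomorphism, and invoking the Bloch-Kato input at exactly the right spot to control the failure of surjectivity of the codimension-$2$ cycle class map. Step (B), by contrast, is now a standard application of the correspondence machinery once the diagonal decomposition is in hand.
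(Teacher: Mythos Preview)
The paper does not give its own proof: it simply cites \cite[Th\'eor\`eme 1.1]{colliot2012cohomologie}. Your sketch is a correct and faithful outline of precisely that argument of Colliot-Th\'el\`ene and Voisin --- the Bloch--Ogus/Merkurjev--Suslin exact sequence of their Th\'eor\`eme~3.7 for Step~(A), followed by the Bloch--Srinivas diagonal decomposition for Step~(B) --- so there is nothing to add.
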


\begin{proof}
	See \cite[Th\'eor\`eme 1.1]{colliot2012cohomologie}.
\end{proof}

\begin{rmk}\label{unirat}
	It is well known that the assumptions of \Cref{ct-voisin} are satisfied when $X$ is unirational. We have learned the following argument from J.-L. Colliot-Th\'el\`ene.
	
	If $X$ is a smooth projective unirational variety over $\C$, then there exist a dense open subset $U\c X$ and a surjective morphism $\varphi:V\to U$, where $V$ is an open subset of some affine space. If $p_1,p_2\in U(\C)$, we may find $q_1,q_2\in V(\C)$ such that $\varphi(q_i)=p_i$ for $i=1,2$. There is a line connecting $q_1$ and $q_2$, hence, since $X$ is complete, we find a morphism $\P^1\to X$ whose image contains $p_1$ and $p_2$. It follows that any two zero-cyles of degree $1$ in $U$ are rationally equivalent.
	
	Now, if $p\in X(\C)$, a moving lemma shows that $p$ is rationally equivalent to a zero-cycle whose support is contained in $U$; see \cite[Compl\'ement, p. 599]{colliot2005theoreme}. We conclude that the degree map $\on{deg}:CH_0(X)\to \Z$ is an isomorphism. Thus, the hypotheses of \Cref{ct-voisin} are satisfied, with $S$ a closed point of $X$.
	
\end{rmk}

\section{Proof of Theorem \ref{beyond-br}}\label{beyond-sec}
We denote by $K_0(\on{Ab})$ the group generated by isomorphism classes $[A]$ of finitely generated abelian groups $A$, modulo the relations $[A\oplus B]=[A]+[B]$. As an abelian group, $K_0(\on{Ab})$ is freely generated by $[\Z]$ and $[\Z/p^n\Z]$, where $p$ ranges among prime numbers and $n\geq 1$; see \cite[Proposition 3.3(i)]{ekedahl2009grothendieck}.


\begin{prop}\label{welldef} 
	Let $i$ be an integer.
	\begin{enumerate}[label=(\alph*)]
		\item There exists a group homomorphism \[Z_{2i}: K_0(\on{Var}_{\C})[\L^{-1}]\to K_0(\on{Ab}),\] given by sending $\set{X}/\L^m\mapsto [Z_{2i+2m}(X)]$ for every smooth projective variety $X$ over $\C$ and every $m\geq 0$.
		\item The homomorphism $Z_{2i}$ is continuous with respect to the filtration topology on $K_0(\on{Var}_{\C})[\L^{-1}]$ and the discrete topology of $K_0(\on{Ab})$. It thus extends uniquely to a group homomorphism \[\hat{Z}_{2i}: \hat{K}_0(\on{Var}_{\C})\to K_0(\on{Ab}).\]
	\end{enumerate} 
\end{prop}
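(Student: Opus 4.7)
The plan for (a) is to invoke the presentation of $K_0(\on{Var}_{\C})[\L^{-1}]$ given by \Cref{bittner-like} and to check that the rule $\set{X}/\L^m \mapsto [Z_{2i+2m}(X)]$ respects each of the relations (i)--(iii). Relation (i) is immediate. For relation (ii), the essential input is the classical blow-up formula: for $\pi \colon \widetilde X \to X$ the blow-up of a smooth projective variety along a smooth subvariety $Y$ of codimension $c$, with exceptional divisor $E = \P(N_{Y/X})$ and projection $g \colon E \to Y$, there are canonical direct sum decompositions $CH^j(\widetilde X) \cong CH^j(X) \oplus \bigoplus_{k=1}^{c-1} CH^{j-k}(Y)$ and $CH^j(E) \cong \bigoplus_{k=0}^{c-1} CH^{j-k}(Y)$, built from $\pi^*$, $g^*$, $j_*$ and intersection with the hyperplane class. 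The same maps yield analogous decompositions of $H^{2j}(-,\Z)$ and of $\on{Hdg}^{2j}(-,\Z)$, since they preserve Hodge type, and they commute with the cycle class maps. Consequently, these decompositions descend to the quotients $Z^{2j}(-)$. Rewriting with the dimensional convention $Z_{2i+2m}(-) = Z^{2\dim(-)-2i-2m}(-)$ and using $\dim \widetilde X = \dim X = d$, $\dim E = d-1$, $\dim Y = d - c$, both sides of the target identity $[Z_{2i+2m}(\widetilde X)] - [Z_{2i+2m}(X)] = [Z_{2i+2m}(E)] - [Z_{2i+2m}(Y)]$ expand (after shifting the summation index) to $\sum_{k=1}^{c-1}[Z^{2(d-i-m-k)}(Y)]$ in $K_0(\on{Ab})$. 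Relation (iii) is handled in the same way, using the simpler projective bundle decomposition $Z^{2j}(X \times_{\C} \P^1) \cong Z^{2j}(X) \oplus Z^{2j-2}(X)$.

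For (b), since $K_0(\on{Ab})$ carries the discrete topology, continuity amounts to showing that $Z_{2i}$ vanishes on $\on{Fil}^n K_0(\on{Var}_{\C})[\L^{-1}]$ for some $n$. By resolution of singularities (\cite[Lemma 3.1]{ekedahl2009grothendieck}), $\on{Fil}^n$ is generated by classes $\set{X}/\L^m$ with $X$ smooth projective and $\dim X - m \leq n$. By convention, $Z^{2j}(X) = 0$ for $j < 0$ or $j > \dim X$; moreover $Z^0(X) = 0$, because the cycle classes of the connected components of $X$ generate $H^0(X,\Z)$. Hence whenever $\dim X - m \leq i$ one has $Z_{2i+2m}(X) = Z^{2(\dim X - i - m)}(X) = 0$. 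Taking $n = i$ gives $Z_{2i}\big|_{\on{Fil}^i} = 0$, which proves continuity; the extension $\hat Z_{2i} \colon \hat K_0(\on{Var}_{\C}) \to K_0(\on{Ab})$ is then the unique continuous prolongation.

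The main substantive point on which everything rests is the compatibility, under the cycle class maps, of the blow-up and projective bundle decompositions of $CH^{\bullet}$ with the corresponding Hodge-respecting decompositions of $H^{2\bullet}(-,\Z)$ and $\on{Hdg}^{2\bullet}(-,\Z)$. This is what allows one to descend to $Z^{2\bullet}$; once it is in place, part (a) reduces to a bookkeeping exercise in codimensions, and part (b) follows from an immediate dimensional vanishing.
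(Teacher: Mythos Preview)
Your proposal is correct and follows essentially the same route as the paper: both invoke the presentation of \Cref{bittner-like}, verify the blow-up and projective-bundle relations by appealing to the standard decompositions of $CH^{\bullet}$ and $H^{2\bullet}(-,\Z)$ together with their compatibility under the cycle class map, and deduce continuity from the dimensional vanishing $Z_{2i}|_{\on{Fil}^i}=0$. Your explicit remark that $Z^0(X)=0$ (needed when $\dim X - m = i$) is a small clarification the paper leaves implicit.
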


\begin{proof}
	(a) To show that $Z_{2i}: K_0(\on{Var}_{\C})[\L^{-1}]\to K_0(\on{Ab})$ is well-defined, we verify that the association $\set{X}/\L^m\mapsto [Z_{2i+2m}(X)]$ respects the relations of \Cref{bittner-like}. It is clear that (i) is satisfied.
	
	Let $m\geq 0$, let $Y\c X$ is a closed embedding of smooth projective complex varieties, let $\widetilde{X}\to X$ be the blow-up of $X$ at $Y$, and let $E$ be the exceptional divisor of the blow-up. Denote by $d$ the dimension of $X$, and by $r$ the codimension of $Y$ in $X$. We want to show that
	\begin{equation}\label{ct-idea}[Z_{2i+2m}(X)]-[Z_{2i+2m}(Y)]=[Z_{2i+2m}(\widetilde{X})]-[Z_{2i+2m}(E)].\end{equation}
	in $K_0(\on{Ab})$. Letting $j=d-i-m$, we see that (\ref{ct-idea}) is equivalent to:
	\begin{equation}\label{bittner-z}[Z^{2j}(X)]-[Z^{2j-2r}(Y)]=[Z^{2j}(\widetilde{X})]-[Z^{2j-2}(E)].\end{equation}
	By \cite[Theorem 9.27]{voisin2007hodge2}, we have a group isomorphism 
	\[\varphi^j:\oplus_{0\leq h\leq r-2}CH^{j-1-h}(Y)\oplus CH^j(X)\xrightarrow{\sim} CH^j(\widetilde{X}).\]
	By \cite[Theorem 7.31]{voisin2007hodge1}, we have an isomorphism of Hodge structures	
	\[\oplus_{0\leq h\leq r-2}H^{2j-2-2h}(Y(\C),\Z)\oplus H^{2j}(X(\C),\Z) \xrightarrow{\sim} H^{2j}(\widetilde{X}(\C),\Z),\] where the Hodge structure on $H^{2j-2-2h}(Y(\C),\Z)$ is shifted by $(h+1,h+1)$, and so has weight $2j$. In particular, we have an isomorphism of groups
	\[\psi^j:\oplus_{0\leq h\leq r-2}\on{Hdg}^{2j-2-2h}(Y,\Z)\oplus \on{Hdg}^{2j}(X,\Z)\xrightarrow{\sim} \on{Hdg}^{2j}(\widetilde{X},\Z).\]
	Comparing the explicit description of these isomorphisms, as given in the references, we see that $\varphi^j$ and $\psi^j$ are compatible with the cycle class maps. In other words, we have a commutative square	
	\[
	\begin{tikzcd}
	\oplus_{0\leq h\leq r-2}CH^{j-1-h}(Y)\oplus CH^j(X)\arrow[r,"\varphi^j"] \arrow[d,"\oplus_h\on{cl}_Y^h\oplus \on{cl}_X^j"] & CH^j(\widetilde{X}) \arrow[d,"\on{cl}_{\widetilde{X}}^j"] \\
	\oplus_{0\leq h\leq r-2}\on{Hdg}^{2j-2-2h}(Y,\Z)\oplus \on{Hdg}^{2j}(X,\Z) \arrow[r,"\psi^j"] & \on{Hdg}^{2j}(\widetilde{X},\Z).		
	\end{tikzcd}
	\]
	We deduce that \begin{equation}\label{blowup}Z^{2j}(\widetilde{X})\cong \oplus_{0\leq h\leq r-2} Z^{2j-2-2h}(Y)\oplus Z^{2j}(X).\end{equation}
	The morphism $E\to Y$ identifies $E$ with the projectivization of the normal bundle of $Y$ inside $X$. By \cite[Theorem 9.25]{voisin2007hodge2}\footnote{Note that the formula of \cite[Theorem 9.25]{voisin2007hodge2} contains a typographical error: $CH_{l-r-1+k}(-)$ should be $CH_{l-r+1+k}(-)$.} and \cite[Lemma 7.32]{voisin2007hodge1}, the pullback along $E\to Y$ induces a commutative diagram
	\[
	\begin{tikzcd}
	\oplus_{0\leq h\leq r-1}CH^{j-1-h}(Y)\arrow[r,"\sim"] \arrow[d,"\oplus_h\on{cl}_Y^h"] & CH^{j-1}(E) \arrow[d,"\on{cl}_E^{j-1}"] \\
	\oplus_{0\leq h\leq r-1}\on{Hdg}^{2j-2-2h}(Y,\Z) \arrow[r,"\sim"] & \on{Hdg}^{2j-2}(E,\Z),		
	\end{tikzcd}
	\]
	where the horizontal arrows are isomorphisms.
	We deduce that \begin{equation}\label{projbundle}Z^{2j-2}(E)\cong \oplus_{0\leq h\leq r-1} Z^{2j-2-2h}(Y).\end{equation}
	Now (\ref{bittner-z}) follows from (\ref{blowup}) and (\ref{projbundle}). Therefore, $Z_{2i}$ respects all relations of type (ii).
	
	It remains to show that $Z_{2i}$ is compatible with relations of type (iii). Let $X$ be a smooth projective variety of dimension $d$, and let $m\geq 0$ be an integer. We must show that
	\[[Z_{2i+2m+2}(X\times_{\C}\P^1_{\C})]-[Z_{2i+2m+2}(X)]=[Z_{2i+2m}(X)].\]
	Setting $j=d-i-m$, the claim becomes
	\begin{equation}\label{PisL-1}
	[Z^{2j}(X\times_{\C}\P^1_{\C})]-[Z^{2j-2}(X)]=[Z^{2j}(X)].
	\end{equation}
	Applying \cite[Theorem 9.25]{voisin2007hodge2} and \cite[Lemma 7.32]{voisin2007hodge1} to the trivial projective bundle $X\times_{\C}\P^1_{\C}\to X$, we obtain a commutative square 
	\[
	\begin{tikzcd}
	CH^{j}(X)\oplus CH^{j-1}(X)\arrow[r,"\sim"] \arrow[d,"\on{cl}_X^j\oplus \on{cl}_X^{j-1}"] & CH^{j}(X\times_{\C}\P^1_{\C}) \arrow[d,"\on{cl}_{X\times_{\C} \P^1_{\C}}^{j}"] \\
	\on{Hdg}^{2j}(X,\Z)\oplus \on{Hdg}^{2j-2}(X,\Z) \arrow[r,"\sim"] & \on{Hdg}^{2j}(X\times_{\C}\P^1_{\C},\Z).		
	\end{tikzcd}
	\]
	Thus \[Z^{2j}(X\times_{\C}\P^1_{\C})\cong Z^{2j}(X)\oplus Z^{2j-2}(X),\] which implies (\ref{PisL-1}). It follows that $Z_{2i}$ respects relations of type (iii) as well, hence $Z_{2i}$ is a well-defined group homomorphism.
	
	(b) Let $X$ be a smooth projective variety of dimension $d$, and let $m\geq d-i$. Then $2i+2m\geq 2d$, and so \[Z_{2i}(\set{X}/\L^m)=[Z_{2i+2m}(X)]=0.\] This means that $Z_{2i}$ sends $\on{Fil}^iK_0(\on{Var}_{\C})[\L^{-1}]$ to zero. Therefore, if we endow $K_0(\on{Var}_{\C})[\L^{-1}]$ with the dimension filtration topology and $K_0(\on{Ab})$ with the discrete topology, the homomorphism $Z_{2i}$ is continuous. It follows that $Z_{2i}$ extends uniquely to a homomorphism $\hat{Z}_{2i}:\hat{K}_0(\on{Var}_{\C})\to K_0(\on{Ab})$.
\end{proof}
We also denote by $Z_{2i}$ the composition \[K_0(\on{Stacks}_{\C})\to \hat{K}_0(\on{Var}_{\C})\xrightarrow{\hat{Z}_{2i}} K_0(\on{Ab}).\]


\begin{prop}\label{h3}
	Let $G$ be a finite group, let $V$ be a faithful $G$-representation over $\C$, and let $X$ be a smooth projective variety over $\C$ that is birational to $V/G$. Then $Z_{2i}(\set{BG})=0$ for every $i\geq -1$, and
	\[Z_{-4}(\set{BG})=[H^3_{\on{nr}}(\C(V)^G/\C,\Q/\Z)].\]
\end{prop}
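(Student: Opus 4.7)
The plan is to establish, in $\hat K_0(\on{Var}_\C)$, the congruence
\[
\{BG\} \equiv \{X\}/\L^n \pmod{\on{Fil}^{-1}},\qquad n := \dim V,
\]
and then to evaluate $\hat Z_{2i}$ on both sides. Both assertions of the proposition will reduce to computing $\hat Z_{2i}(\{X\}/\L^n) = [Z^{-2i}(X)]$: for $i \geq -1$ the error is automatically killed by $\hat Z_{2i}$, while for $i = -2$ a finer vanishing is needed.

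\textbf{Step 1 (geometric reduction).} Let $U \c V$ be the open $G$-invariant locus where $G$ acts freely, and $S := V \setminus U$. The projection $[V/G] \to BG$ is a rank-$n$ vector bundle, so $\{[V/G]\} = \L^n \{BG\}$. Stratifying by the free locus gives $\{[V/G]\} = \{U/G\} + \{[S/G]\}$, while the usual scissor identity in $K_0(\on{Var}_\C)$ gives $\{V/G\} = \{U/G\} + \{S/G\}$. Hence
\[
\L^n \{BG\} - \{V/G\} = \{[S/G]\} - \{S/G\}.
\]
Since $\dim S \leq n-1$ and the class of a quotient stack of a $d$-dimensional variety lies in $\on{Fil}^d \hat K_0(\on{Var}_\C)$, the right-hand side lies in $\on{Fil}^{n-1}$. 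Dividing by $\L^n$:
\[
\{BG\} - \{V/G\}/\L^n \in \on{Fil}^{-1} \hat K_0(\on{Var}_\C).
\]

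\textbf{Step 2 (birational comparison).} Choose a smooth projective resolution $\tilde Y \to V/G$; since it is an isomorphism over a common open set, $\{V/G\} - \{\tilde Y\} \in \on{Fil}^{n-1}$. Since $\tilde Y$ and $X$ are smooth projective and birational, weak factorization expresses their difference as a sum of terms of the form $\{Z\}(\L + \cdots + \L^r)$, with $Z$ smooth of codimension $r+1 \geq 2$; each such term has dimension $(n-r-1) + r = n-1$. Hence $\{\tilde Y\} - \{X\} \in \on{Fil}^{n-1}$, and combining with Step 1 gives $\{BG\} - \{X\}/\L^n \in \on{Fil}^{-1}$.

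\textbf{Step 3 (vanishing of $\hat Z_{2i}$ on $\on{Fil}^{-1}$).} For a generator $\{Y\}/\L^m$ of $\on{Fil}^{-1}$ (so $m \geq \dim Y + 1$), one has
\[
Z_{2i}(\{Y\}/\L^m) = [Z^{2\dim Y - 2i - 2m}(Y)]
\]
with superscript $\leq -2i - 2$. For $i \geq 0$ this is $\leq -2 < 0$, hence zero by convention; for $i = -1$ the superscript is $\leq 0$ and $Z^0(Y) = 0$ since the fundamental class is algebraic; for $i = -2$ the superscript is $\leq 2$, and $Z^0 = 0$ trivially while $Z^2 = 0$ by the Lefschetz $(1,1)$-theorem. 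Thus $\hat Z_{2i}$ kills $\on{Fil}^{-1}$ for every $i \geq -2$.

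\textbf{Step 4 (conclusion).} Combining Steps 2 and 3, $\hat Z_{2i}(\{BG\}) = [Z^{-2i}(X)]$ for $i \geq -2$. For $i \geq 1$ this is zero by convention; for $i = 0$ and $i = -1$ it vanishes by the same reasons as in Step 3. For $i = -2$, $V/G$ is unirational (being dominated by the rational variety $V$), hence so is $X$, so by \Cref{unirat} the hypothesis of \Cref{ct-voisin} holds and $Z^4(X) \cong H^3_{\on{nr}}(\C(X)/\C, \Q/\Z) = H^3_{\on{nr}}(\C(V)^G/\C, \Q/\Z)$.

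\textbf{Main obstacle.} The delicate case is $i = -2$: the error $\{BG\} - \{X\}/\L^n$ lies only in $\on{Fil}^{-1}$, whereas the formal extension of $Z_{-4}$ to $\hat K_0$ only guarantees vanishing on $\on{Fil}^{-2}$. The extra vanishing $Z^0 = Z^2 = 0$ (trivial plus Lefschetz) is what bridges the gap and explains why the argument stops at $H^3_{\on{nr}}$.
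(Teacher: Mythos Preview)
Your argument is correct and reaches the same conclusion as the paper, but it is organized differently and has one step that needs justification.

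The paper argues by induction on $|G|$: it invokes Ekedahl's decomposition \cite[Theorem 3.4]{ekedahl2009geometric} to write $\{BG\}\L^d=\{U/G\}+\sum_j m_j\{BH_j\}\L^{a_j}$ with $H_j\lneq G$ and $a_j\le d-1$, applies $Z_{2i}$ term by term, and disposes of the $\{BH_j\}$ contributions by the inductive hypothesis together with Lefschetz $(1,1)$. You instead work directly with the dimension filtration: you show $\{BG\}\equiv\{X\}/\L^n\pmod{\on{Fil}^{-1}}$ and then prove the sharper continuity statement that $\hat Z_{2i}$ annihilates all of $\on{Fil}^{-1}$ (not merely $\on{Fil}^{i}$) for every $i\ge-2$. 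This is a clean repackaging; it isolates exactly the role of the Lefschetz $(1,1)$-theorem as buying one extra filtration step of vanishing, and your weak-factorization argument in Step~2 handles an arbitrary smooth projective model $X$ rather than the particular one coming from a resolution.

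The gap is in Step~1. Your assertion that $\{[S/G]\}$ lands in $\on{Fil}^{\dim S}\hat K_0(\on{Var}_\C)$ is the whole content of the inductive part of the paper's proof, and you give no argument for it. It is true --- for instance, embed $G\hookrightarrow\on{GL}_N$, use that $\on{GL}_N$ is special to write $\{[S/G]\}=\{S\times^G\on{GL}_N\}\cdot\{\on{GL}_N\}^{-1}$, and note that $\{\on{GL}_N\}^{-1}\in\on{Fil}^{-N^2}$ while $S\times^G\on{GL}_N$ is a variety of dimension $\dim S+N^2$ --- but you must supply this or an equivalent induction. Without it, the phrase ``the class of a quotient stack of a $d$-dimensional variety lies in $\on{Fil}^d$'' is precisely the nontrivial input, and once you unwind it by induction on $|G|$ your proof becomes essentially the paper's.
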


\begin{proof}
	Let $V$ be a faithful complex $G$-representation of dimension $d\geq 1$, and let $U\c V$ be the non-empty open subscheme where $G$ acts freely. By \cite[Theorem 3.4]{ekedahl2009geometric}, we may write
	\begin{equation}\label{h3-eq1}\set{BG}\L^{d}=\set{U/G}+\sum_j m_j\set{BH_j}\L^{a_j}\end{equation} in $K_0(\on{Stacks}_{\C})$, where the $H_j$ are distinct proper subgroups of $G$, $m_j\in\Z$ and $a_j\leq d-1$.
	
	Using resolution of singularities, we may write \begin{equation}\label{h3-eq2}\set{U/G}=\set{X}+\sum_q n_q\set{X_q}\end{equation} in $K_0(\on{Var}_{\C})$, where $X$ and the $X_q$ are smooth projective varieties over $\C$, $X$ is birationally equivalent to $U/G$, $\on{dim}(X_q)\leq d-1$, and $n_q\in \Z$ for every $q$. We substitute (\ref{h3-eq2}) into (\ref{h3-eq1}) and divide by $\L^d$: 
	\[\set{BG}=\set{X}\L^{-d}+\sum_q n_q\set{X_q}\L^{-d}+\sum_j m_j\set{BH_j}\L^{a_j-d}.\]
	We apply $Z_{2i}$:
	\begin{equation}\label{hodge-induct}Z_{2i}(\set{BG})=[Z_{2i+2d}({X})]+\sum_q n_q[Z_{2i+2d}({X_q})]+\sum_j m_jZ_{2i+2d-2a_j}(\set{BH_j}).\end{equation}
	If $G$ is trivial, then $BG\cong \on{Spec}{\C}$ and there is nothing to prove. Assume now that $G$ is non-trivial, and that the conclusion of the proposition holds for all $i\in \Z$ and all proper subgroups of $G$. 
	
	By the Lefschetz Theorem on $(1,1)$-classes, we have $Z_{2d-2}(X)=Z^2(X)=0$. Therefore, if $i\geq -1$ every term on the right hand side of (\ref{hodge-induct}) is zero. This shows that $Z^{i}(\set{BG})=0$ for all $i\geq -1$.
	
	If $i=-2$, another application of the Lefschetz Theorem on $(1,1)$-classes shows that the right hand side of (\ref{hodge-induct}) reduces to $[Z_{2d-4}(X)]=[Z^4(X)]$. Since $X$ is birationally equivalent to $V/G$, by \Cref{ct-voisin} we have: \[Z^4(X)\cong H^3_{\on{nr}}(X,\Q/\Z)\cong H^3_{\on{nr}}(\C(V)^G/	\C,\Q/\Z).\qedhere\] 
\end{proof}

\begin{proof}[Proof of \Cref{beyond-br}]
	By a standard limit argument, we may assume that $k$ is finitely generated over $\Q$. Fix an embedding $k\hookrightarrow \C$. By assumption, we have $H^3_{\on{nr}}(\C(V)^G/\C,\Q/\Z)\neq 0$, hence by \Cref{h3} we obtain $Z_{-4}(\set{BG})\neq 0$. On the other hand, it is clear that $Z_{-4}(\set{\on{Spec}\C})=0$. We conclude that $\set{BG}\neq 1$ in $K_0(\on{Stacks}_{\C})$, hence $\set{BG}\neq 1$ in $K_0(\on{Stacks}_k)$.
\end{proof}

\section*{Acknowledgements}
I thank  Jean-Louis Colliot-Th\'el\`ene for making me aware of the results of \cite{colliot2012cohomologie}, and for a conversation which eventually led me to \Cref{beyond-br}. I thank Angelo Vistoli for posing \Cref{beyond-question} to me, and my advisor Zinovy Reichstein for helpful suggestions on the exposition.

\end{document}